\Crefname{figure}{Fig.}{Figs.} 
\crefname{assumption}{Assumption}{Assumptions}
\crefname{problem}{Problem}{Problems}
\newcommand*{\QEDB}{\hfill\ensuremath{\square}}
\definecolor{matlabblue}{rgb}{0.0000, 0.4470, 0.7410}
\definecolor{matlaborange}{rgb}{0.8500, 0.3250, 0.0980}
\definecolor{mygray}{rgb}{0.7, 0.7, 0.7}
\definecolor{mygreen}{HTML}{13ec13}
\title{\LARGE\bf Temporal Variabilities Limit Convergence Rates in \\
Gradient-Based Online Optimization}
\author{Bryan Van Scoy\and Gianluca Bianchin%
\thanks{This material is based upon work supported in part by the National Science Foundation under Award No. 2347121 and in part by the FRFS WEL-T Investigator Programme. Any opinions, findings and conclusions or recommendations expressed in this material are those of the authors and do not necessarily reflect the views of the National Science Foundation.}%
\thanks{%
B.~Van~Scoy is with the Dept. of Electrical and Computer Engineering, Miami University, OH~45056, USA. Email: \texttt{bvanscoy@miamioh.edu}%
}
\thanks{%
G.~Bianchin is with the ICTEAM institute and the Department of Mathematical Engineering (INMA) at the University of Louvain, Belgium.
Email: \texttt{gianluca.bianchin@uclouvain.be}}%
\hspace{-1cm}}
\begin{document}

\maketitle

\begin{abstract}
This paper investigates the fundamental performance limits of gradient-based 
algorithms for time-varying optimization.
Leveraging the internal model principle and root locus techniques, 
we show that temporal variabilities impose intrinsic limits on the 
achievable rate of convergence. For a problem with condition ratio $\kappa$ 
and time variation whose model has degree $n$, we show that the worst-case 
convergence rate of any  minimal-order gradient-based algorithm is
$\rho_\text{TV} = \bigl(\tfrac{\kappa-1}{\kappa+1}\bigr)^{1/n}$.
This bound reveals a fundamental tradeoff between problem conditioning,
temporal complexity, and rate of convergence.
We further construct explicit controllers that attain the bound for low-degree models of time variation.
\end{abstract}

\section{Introduction}

Time-varying optimization problems provide a natural framework to describe 
decision-making tasks in which objectives and/or constraints evolve dynamically 
over time. Such problems arise in diverse engineering domains, including online 
learning and streaming data in machine learning~\cite{YC-YZ:20,AR-KS:13}, adaptive 
filtering in signal processing~\cite{FJ-AR:12}, and trajectory planning or 
model predictive control in robotics~\cite{AD-VC-AG-GR-FB:23}.

Historically, the study of time-varying optimization has relied on 
continuity 
arguments with respect to static formulations: when the temporal variations 
of the problem are sufficiently slow, algorithms developed for 
static problems produce nearly optimal solutions in the 
time-varying setting~\cite{AS-ED-SP-GL-GG:20}. This reasoning suggests 
that algorithms for time-varying problems may exhibit the same fundamental
performance characteristics as those for time-invariant problems.
%
For example, it is well established that the best achievable rate of gradient descent on time-invariant problems is $\rho_\text{TI} = \tfrac{\kappa-1}{\kappa+1}$, where~$\kappa$ denotes the condition ratio of the problem~\cite{LL-BR-AP:16}.
%
Accordingly, one may be tempted to conclude that algorithms for time-varying 
problems can attain the same rate, provided that the temporal variability is `slow 
enough.'
In contrast, we prove a fundamental relationship between the convergence rate $\rho$, the condition ratio $\kappa$, and the number $n$ of modes that characterize the temporal variability of the problem. Since there is no direct relation between $n$ and {`how fast'} 
the problem varies, but rather with the \emph{complexity} of its temporal 
structure, our results reveal fundamental differences between static 
optimization problems and their dynamic counterparts.


{\it Contributions.}
Our main contributions are as follows:
\begin{enumerate}[topsep=-4pt,itemsep=-2pt]
    \item We provide a fundamental bound on the worst-case convergence rate of minimal-degree controllers for unconstrained quadratic time-varying optimization, which~is 
    \begin{equation}\label{eq:TV}
        \rho_\text{TV} \defeq \biggl(\frac{\kappa-1}{\kappa+1}\biggr)^{1/n},
    \end{equation}
    where $\kappa$ is the condition ratio of the objective, and $n$ is the number of modes in the model of the time variation.
    Since quadratics are a special case of convex, Lipschitz-smooth objectives, this lower bound also applies as fundamental limitations to this broader class.
    \item We use root locus techniques to design explicit controllers for particular models of the time variation.
\end{enumerate}

{\it Related works.}
The literature on time-varying optimization methods can be broadly divided 
into two classes. The first class comprises approaches that ignore or do not 
exploit any model of the temporal variability, and instead solve a sequence 
of static problems~\cite{MZ:03,EH-AA-SK:07,EH:16}.
These methods only 
react after changes are observed, and therefore incur a certain regret and achieve at 
best convergence to a neighborhood of the optimizer~\cite{AS-ED-SP-GL-GG:20}. 
The second class, instead, leverages a model of the temporal evolution of 
the problem to track the optimal trajectory exactly. Indeed, such a model is necessary for exact tracking~\cite{GB-BVS:25-cdc,GB-BVS:24-arxiv}. A prominent example is 
the prediction-correction framework~\cite{YZ-MS:98,MF-SP-VP-AR:17}, where each 
step combines a prediction of the optimizer's evolution with a correction based 
on the current problem. Extensions of these ideas have recently been studied 
under contraction analysis~\cite{AD-VC-AG-GR-FB:23}, sampling-based estimation 
of variability~\cite{MM-JB-JS-PT:24}, and constrained formulations~\cite{AS-ED:17}; 
see also the survey~\cite{AH-ZH-SB-GH-FD:24}. 

There has recently been a spark of interest in using control tools to 
design optimization algorithms. Fundamental results have been derived 
in~\cite{GB-BVS:25-cdc} for discrete-time problems and~\cite{GB-BVS:24-arxiv} for 
continuous-time ones. 
For problems with quadratic objectives, root locus and the internal model principle can be used to analyze and design the optimization method~\cite{NB-RC-SZ:24}. Recently, \cite{UC-NB-RC-SZ:25} 
have investigated constrained settings and stochastic problems. 
Particularly relevant to this work are~\cite{AW-IP-VU-IS:25,AXW-IRP-IS:25}, 
which considered quadratic objectives with polynomial temporal variabilities. 
In~\cite{AXW-IRP-IS:25}, the authors use Nevanlinna--Pick interpolation to establish a fundamental lower bound of 
$\bigl(\tfrac{\sqrt{\kappa}-1}{\sqrt{\kappa}+1}\bigr)^{1/n}$ 
(cf.~\eqref{eq:TV}). 
These results, however, are restricted to polynomial temporal variabilities 
and focus on accelerated methods, which are known to potentially fail to 
achieve global convergence on more general objectives 
(beyond quadratics)~\cite{LL-BR-AP:16}, and are also susceptible to noise 
amplification in gradient evaluations~\cite{HM-MR-MRJ:25}. For these reasons, 
we restrict our attention to the class of non-accelerated (minimal-order) methods 
and general temporal variabilities.


{\it Notation:} 
We denote by $\natural$ and $\real$ the sets of natural and real 
numbers, respectively; by $\real[z]$ the space of real-coefficient 
polynomials in $z$; and by $\real^d[z]$ the space of $d$-dimensional 
vector polynomials in $z$ with real coefficients.

\section{Problem Setup}

We consider time-varying unconstrained optimization problems, consisting of 
minimizing the quadratic objective
\begin{equation}\label{eq:objective}
    f_k(x) = \tfrac{1}{2} x^\tp A x + b_k^\tp x
\end{equation}
with time indexed by $k \in \natural.$
Here, $x \in \mathbb{R}^d$ is the decision variable, \(A \in \mathbb{R}^{d \times d}\) is a symmetric time-invariant matrix, and 
$b = \{b_k\}_{k \in \mathbb{N}}$ with $b_k \in \mathbb{R}^d$ is a time-varying 
parameter.

\begin{remark}[Quadratic objective functions]
We focus on the class of quadratic objectives as they provide a structured 
framework to derive bounds on the worst-case convergence rate.
Since quadratics are a special case of convex, Lipschitz-smooth 
objectives, and our goal is to establish lower bounds on the convergence 
rate, the forthcoming estimates also serve as fundamental limitations for 
this broader function class. 
This is in line with the time-invariant case, 
where~\cite[Sec.~2.1.4]{YN:18} shows that the optimal convergence rate 
achievable by any iterative algorithm on problems with convex and 
Lipschitz-smooth loss is attained for quadratics.~
\QEDB\end{remark}

We make the following assumptions throughout.

\begin{assumption}[Eigenvalues of $A$]\label{assumption:eigenvalues}
  The matrix $A$ has eigenvalues in the closed interval $[\mu,L]$ with $0 < \mu < L$. Moreover, the parameters $\mu$ and $L$ are known.
\QEDB\end{assumption}

By \Cref{assumption:eigenvalues}, the cost~\eqref{eq:objective} is 
strongly convex with parameter $\mu$, and the gradient is Lipschitz smooth with parameter $L$.
Considering problems of this class is a standard assumptions in 
optimization~\cite{EH:16}, which has been widely used in related 
works~\cite{GB-JC-JP-ED:21-tcns,AH-SB-GH-FD:20,GC-NM-GN:24}.
In what follows, we let $\kappa\defeq L/m$ denote the {\it condition ratio}
of the objective in~\eqref{eq:objective}.


We make the following assumption on $B(z)$, the $\Z$-transform of the time-varying sequence $b = \{b_k\}_{k\in\natural}.$

\begin{assumption}[Model of time variation]\label{assumption:model}
$B(z)$ is a rational function of $z$ with all poles in $\disk_{=1} \defeq \{z : |z|=1\}$.
\QEDB\end{assumption}

Assumption~\ref{assumption:model} specifies the class of temporal 
variations of the parameter \(b\) under consideration. Intuitively, signals 
whose $\Z$-transform is rational in $z$ are signals generated by 
finite-order linear recurrences (i.e., by LTI systems with a finite number of 
states) with poles that lie on the unit circle. This class includes a wide 
variety of signals including causal periodic sequences 
(sinusoids, square waves, etc.) and polynomial sequences 
(constant, ramp, parabolic, etc.). Similar assumptions have been employed 
in related works~\cite{NB-RC-SZ:24,AXW-IRP-IS:25}.

By \Cref{assumption:model}, $B(z)$ admits the representation
\begin{align}\label{eq:Bz}
    B(z) = \frac{B_N(z)}{m(z)},
\end{align}
where $B_N(z) \in \real^d[z]$ and $m(z) \in \real[z]$. Without loss of 
generality, we use the notation
\begin{align*}
    m(z) = z^n + \sum_{i=0}^{n-1} m_i z^i, \qquad m_i \in \real,
\end{align*}
where $n$ is the number of poles\footnote{Note that~\eqref{eq:Bz} does not restrict the poles of \(B(z)\) to be 
identical for each component, since \(m(z)\) can be 
chosen as the polynomial whose root set is the union of the root sets of the 
individual components of \(B(z)\).} of $B(z)$.

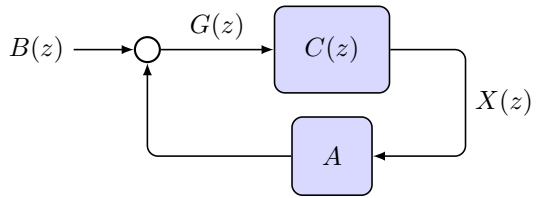
\begin{figure}[t!]
  \centering
  \begin{tikzpicture}
      \node [block] (C) {$C(z)$};
      \node [block,below=3mm of C.south,anchor=north] (A) {$A$};
      \node [sum,left=15mm of C.west,anchor=east] (s) {};
    
      \draw (s) -- node[pos=0.5,anchor=south] {$G(z)$} (C);
      \draw (C.east) -- ++(10mm,0) -- node[pos=0.5,anchor=west] {$X(z)$} (\currentcoordinate |- A) -- (A);
      \draw (A) -| (s);
      \draw [<-] (s.west) -- node[pos=1,anchor=east] {$B(z)$} ++(-8mm,0);
    \end{tikzpicture}
  \caption{Structure of the gradient-basing optimization algorithms, as a block-diagram in the frequency domain. See~\eqref{eq:tf_00}.}
  \label{fig:controller}
\end{figure}

In line with the literature~\cite{YN:18}, we focus on gradient-type algorithms 
for minimizing~\eqref{eq:objective}; that is, algorithms that have 
access to oracle evaluations of the gradient of~\eqref{eq:objective}:
\begin{align}\label{eq:gradient}
(k, x) \ \mapsto\ \nabla f_k(x) = Ax + b_k .
\end{align}
We restrict our attention to optimization algorithms whose iterates are obtained by 
processing~\eqref{eq:gradient} through Linear Time-Invariant (LTI) filters.
Precisely, let $x_k \in \real^d$ denote the estimate for the 
minimizer of~\eqref{eq:objective} generated by the algorithm\footnote{Since our 
focus is on characterizing the asymptotic convergence rate of the 
method, we henceforth assume that the internal state of the optimization filter is initialized to zero, noting that the framework can be 
extended to nonzero initial conditions by accounting for the free response of 
\(C(z)\), an extension we leave for future work due to space limitations.} 
at time  $k \in \natural,$ and by $X(z)$ its $\Z$-transform. 
Then, we consider algorithms that generate $x_k$ as:
\begin{align}\label{eq:tf_00}
X(z) = C(z) G(z) =C(z) (AX(z) + B(z)),
\end{align}
where $C(z) \in \real^{d\times d}[z]$ is the transfer function of the filter, and $G(z) = A X(z) + B(z)$ is the $\Z$-transform of the gradient $\nabla f_k(x_k)$. This algorithm structure is illustrated in \Cref{fig:controller}. We make the following assumption on the filter throughout, which specifies that $C(z)$ is a linear filter, with 
the additional requirement that it be strictly proper to guarantee real-time 
implementability of the algorithm.

\begin{assumption}[Structure of the optimization filter]\label{assumption:controller}
$C(z)$ is a rational, strictly proper function of $z$.  
\QEDB\end{assumption}

\begin{example}[Gradient descent]
\label{ex:grad_descent}
It is immediate to verify (by applying the $\Z$-transform to both sides of the equation) 
that the gradient-descent algorithm:
$$x_{k+1}=x_k-\alpha \nabla f_k \left(x_k\right),$$
is a particular instance of~\eqref{eq:tf_00} with $C(z)=-\frac{\alpha}{z-1} I_d.$~
\QEDB\end{example}

In what follows, we focus on designing optimization filters with optimal rate 
of convergence and that are of minimal order; we make these two notions formal 
next.

\begin{definition}[Asymptotic tracking]
\label{def:asymptotic_tracking}
We say that~\eqref{eq:tf_00} \textit{asymptotically tracks the minimizer 
of~\eqref{eq:objective}} if the sequence~$\{x_k\}$ satisfies
\[
    \lim_{k\to\infty}\,\|x_k - x_k^\star\| = 0,
\]
where $x_k^\star := -A^{-1} b_k$ is the minimizer of~\eqref{eq:objective}. Moreover, the \textit{root-convergence factor} (or, simply, {\it convergence rate}) is
\[
    \rho \defeq \limsup_{k\to\infty}\,\|x_k - x_k^\star\|^{1/k}. \tag*{\QEDB}
\]
\end{definition}

Note that \Cref{def:asymptotic_tracking} formalizes a notion of {\it exact} 
tracking, whereby $x_k$ reaches $x_k^\star$ with {\it zero error}, 
asymptotically.

\begin{remark}
The minimizer $x_k^\star$ of the quadratic objective~\eqref{eq:objective} has \(\Z\)-transform 
\(X^\star(z) = -A^{-1} B(z) = -\tfrac{1}{m(z)} A^{-1} B_N(z)\). 
Consequently, \(B(z)\) and \(X^\star(z)\) share the same poles. It follows that 
Assumption~\ref{assumption:model} can equivalently be formulated in 
terms of a model for \(x_k^\star\) rather than \(b_k\), aligning our formulation with other models in the literature 
(e.g.,~\cite{AXW-IRP-IS:25}).
~\QEDB\end{remark}

\begin{definition}[Optimization filters of minimal order]\label{def:minimal-order}
Let $C(z)$ be an optimization filter that asymptotically tracks the minimizer 
of~\eqref{eq:objective}. We say that $C(z)$ is of {\it minimal order} if the  
degree of its denominator polynomial is minimal among all filters that 
asymptotically track the minimizer.    
\QEDB\end{definition}

Although higher-order filters could be employed, yielding accelerated 
algorithms~\cite{AD-DS-AT:21}, our focus here is on the 
class of {\it non-accelerated} gradient methods~\cite{YN:18}. 
Motivations for studying non-accelerated methods include that accelerated methods are known to possibly fail to achieve global convergence on more general objectives (beyond quadratics)~\cite{LL-BR-AP:16} and also amplify noise in gradient evaluations~\cite{HM-MR-MRJ:25}.

We are now ready to formalize the objective of this work.

\begin{problem}\label{prob:main}
Determine the optimal worst-case convergence rate achievable by any 
minimal-order optimization algorithm of the form~\eqref{eq:tf_00}, where 
\textit{optimal} is with respect to all optimization filters of minimal order 
satisfying \Cref{assumption:controller}, and \textit{worst-case} is with 
respect to all objectives of the form~\eqref{eq:objective} satisfying 
\Cref{assumption:eigenvalues,assumption:model}.
In addition, construct an optimization filter that attains this rate.
\QEDB\end{problem}




\section{Preliminaries}

In this section, we provide an instrumental characterization of the convergence rate that will enable us to address \Cref{prob:main}. 
We begin with the following result.

\begin{lemma}[Fundamental structure of tracking filters]
\label{lem:necessary+sufficient_tracking}
Let \Cref{assumption:eigenvalues,assumption:model,assumption:controller} hold, and consider optimization filters of the form
\begin{equation}\label{eq:C-minimal}
    C(z) = \frac{C_N(z)}{m(z)}, \qquad C_N(z)\in\real^{d\times d}[z].
\end{equation}
\begin{enumerate}[label=(L\arabic*),leftmargin=*, itemsep=\parskip, topsep=0pt, partopsep=0pt, parsep=0pt] 
    \item \label{Lemma1.1}
    Suppose $C(z)$ is an optimization filter that achieves 
    exact asymptotic tracking. Then, $C(z)$ is of minimal order only if it has the form~\eqref{eq:C-minimal}.
    \item \label{Lemma1.2}
    Suppose $C(z)$ has the form~\eqref{eq:C-minimal} and the roots of 
    $\det(m(z) I - A C_N(z))$ are in $\disk_{<1} \defeq \{z : |z|<1\}.$
    Then, $C(z)$ is an optimization filter of minimal order 
    that achieves exact asymptotic tracking.
\end{enumerate}
\end{lemma}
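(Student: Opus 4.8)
The plan is to treat the closed loop as an LTI tracking problem and invoke the internal model principle. The tracking error has $\mathcal{Z}$-transform $E(z) = X(z) - X^\star(z)$; from \eqref{eq:tf_00} one solves $X(z) = (I - C(z)A)^{-1} C(z) B(z)$, and since $X^\star(z) = -A^{-1}B(z)$, a short manipulation gives $E(z) = -(I - C(z)A)^{-1} A^{-1} B(z)$, i.e. $E(z) = -(A - A C(z) A)^{-1} B(z)$ (equivalently $E(z) = (A C_N(z) - m(z) I)^{-1}\, m(z)\, \text{(something)} \cdot$ after clearing denominators). The key observations are: (i) asymptotic tracking with convergence rate $\rho$ is equivalent to $E(z)$ being a proper rational function all of whose poles lie in $\disk_{<1}$ (and $\rho$ is the spectral radius of those poles); and (ii) the only poles that $B(z)$ contributes — namely the roots of $m(z)$, which sit on $\disk_{=1}$ by \Cref{assumption:model} — must be cancelled by the loop. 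This cancellation forces $m(z)$ to divide into the characteristic data of $C(z)$, which is precisely the internal model requirement.

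For part \ref{Lemma1.2}: suppose $C(z) = C_N(z)/m(z)$ with $\det(m(z) I - A C_N(z))$ Schur (roots in $\disk_{<1}$). Substituting into the error expression and clearing the $m(z)$ in the denominator of $C(z)$, the factor $m(z)$ appearing from $B(z)$ in the denominator cancels exactly, leaving $E(z) = \operatorname{adj}(m(z)I - A C_N(z))\, B_N(z)\,/\det(m(z)I - A C_N(z))$ up to sign — a proper rational function whose poles are exactly the roots of $\det(m(z)I - A C_N(z))$, hence in $\disk_{<1}$. By the final-value / region-of-convergence argument this gives $\|x_k - x_k^\star\| \to 0$ geometrically, so $C(z)$ achieves exact asymptotic tracking; strict properness of $C(z)$ follows since $\deg C_N < \deg m = n$ is inherited from \Cref{assumption:controller}. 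Minimality then follows from part \ref{Lemma1.1} applied in the converse direction: any tracking filter must "contain" $m(z)$, so no denominator of degree $< n$ can work, and degree exactly $n$ is achieved here.

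For part \ref{Lemma1.1}: suppose $C(z)$ is any strictly proper rational filter achieving exact tracking, written $C(z) = C_{\mathrm{num}}(z)/d_C(z)$ in coprime form. Exact tracking means $E(z)$ has all poles in $\disk_{<1}$. But $E(z) = -(I - C(z)A)^{-1} A^{-1} B(z)$, and the poles of $B(z)$ on $\disk_{=1}$ — the roots of $m(z)$ — must therefore be cancelled. A root $z_0$ of $m(z)$ is not a pole of $E(z)$ only if $(I - C(z)A)^{-1}$ has a zero there that kills it, i.e. $\det(I - C(z)A)$ must vanish at $z_0$ to sufficient order; since $I - C(z)A = (d_C(z) I - C_{\mathrm{num}}(z) A)/d_C(z)$, this forces $d_C(z_0) = 0$ (a pole of $C$ cannot be cancelled by the numerator by coprimeness, and is the only mechanism to create the required zero of $\det(I-C(z)A)$ given $z_0$ is not a zero of the entries generically). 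Running this over all roots of $m(z)$ with multiplicity shows $m(z) \mid d_C(z)$, so $\deg d_C \ge n$; together with the construction in \ref{Lemma1.2} this shows the minimal denominator degree is exactly $n$ and any minimal-order tracking filter has the form \eqref{eq:C-minimal}.

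The main obstacle I anticipate is the matrix (multivariable) bookkeeping in part \ref{Lemma1.1}: arguing cleanly that the poles of $m(z)$ in the scalar-looking factor $A^{-1}B(z)$ genuinely must be cancelled by poles of $C(z)$ — rather than, say, by a fortuitous zero of some off-diagonal entry, or by pole–zero structure that differs component-by-component (the footnote's point about $m(z)$ being the union of component pole sets) — requires care. I would handle this by diagonalizing $A$ (symmetric, so orthogonally diagonalizable with eigenvalues in $[\mu,L]$), which decouples the loop into $d$ scalar channels with the same $m(z)$-structure, reducing the argument to the scalar SISO internal model principle applied channel-by-channel and then recombining; the Schur condition on $\det(m(z)I - AC_N(z))$ is exactly the product of the scalar characteristic polynomials across channels in a basis that simultaneously (block-)triangularizes the relevant pencils, so the spectral-radius/convergence-rate identification is clean there.
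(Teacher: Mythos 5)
Your proposal follows essentially the same route as the paper: both arguments hinge on the closed-loop transfer function from $B(z)$ to the tracking error (the paper uses the gradient $G(z)=(I-AC(z))^{-1}B(z)$, which equals $A$ times your $E(z)$, so the pole structure is identical), observe that the marginally stable poles of $B(z)$ at the roots of $m(z)$ must be cancelled by the loop, conclude $m(z)\mid c_D(z)$ for necessity, and for sufficiency substitute $c_D=m$ to see the closed-loop poles are exactly the roots of $\det(m(z)I-AC_N(z))$. Two slips are worth flagging. First, your mechanism for the cancellation in \ref{Lemma1.1} is stated backwards: you need $(I-C(z)A)^{-1}$ to \emph{vanish} at a root $z_0$ of $m(z)$, which happens when $C(z)$ has a \emph{pole} there (so $\det(I-C(z)A)$ blows up at $z_0$, it does not vanish; a zero of that determinant at $z_0$ would instead create an unstable closed-loop pole). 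Your parenthetical conclusion $d_C(z_0)=0$ is nevertheless the right one, and the paper handles the remaining loophole --- cancellation by the adjugate or by $B_N$ --- by noting that $B_N(z_0)\neq 0$ (since $z_0$ is a pole of $B$) and that an adjugate cancellation cannot hold for \emph{all} $A$ satisfying \Cref{assumption:eigenvalues}, which is where the worst-case quantification over $A$ enters. Second, your proposed fix for the ``main obstacle'' --- diagonalizing $A$ to decouple into $d$ scalar channels --- does not apply to this lemma as stated, because here $C_N(z)\in\real^{d\times d}[z]$ is a general matrix transfer function that need not commute with $A$; that decoupling is only valid once $C(z)=c(z)I_d$ is imposed (\Cref{assumption:controller-minimal}), which the paper introduces only after this lemma.
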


\begin{proof}
Solving~\eqref{eq:tf_00} for the gradient yields
\[
    G(z) = (I - AC(z))^{-1} B(z).
\]
By \Cref{assumption:controller}, we can write 
$C(z) = \tfrac{C_N(z)}{c_D(z)}$ with 
$C_N(z) \in \real^{d\times d}[z]$ and $c_D(z) \in \real[z]$. 
Substituting this form and~\eqref{eq:Bz} into the expression for the gradient yields
\begin{align}\label{eq:auxG}
    G(z) = \frac{c_D(z)}{m(z)} \bigl(c_D(z) I - AC_N(z)\bigr)^{-1}\,B_N(z).
\end{align}
The filter achieves exact asymptotic tracking if and only if all poles of $G(z)$ are strictly inside the unit circle.
Each entry of $(c_D(z) I - AC_N(z))^{-1}$ is a rational function of $z$, and the denominator polynomial is $\det(c_D(z)I-AC_N(z))$. Since all roots of $m(z)$ are 
marginally stable by \Cref{assumption:model}, the poles of $G(z)$ would lie on the unit 
circle unless the poles introduced by $m(z)$ were canceled by either $c_D(z)$, $B_N(z)$, or the adjugate of $c_D(z)I-AC_N(z)$. Since the roots of $m(z)$ are poles of $B(z)$, they are not canceled by each component of $B_N(z)$. Moreover, they cannot be canceled by the adjugate for all $A$ satisfying \Cref{assumption:eigenvalues}.
Thus, for $C(z)$ to achieve exact asymptotic tracking with minimal order, it's denominator must take the form $c_D(z) = m(z)$ as claimed.
Finally, the choice $C(z)$ in~\eqref{eq:C-minimal} includes precisely this necessary factor 
and no additional pole factors, and hence is of minimal order.
%
\end{proof}


\Cref{lem:necessary+sufficient_tracking} provides necessary and sufficient 
conditions for asymptotic tracking. The statement~\ref{Lemma1.1} provides a 
necessary condition for an optimization filter to be of minimal order; the 
property that $C(z)$ is required to incorporate precisely the same poles as 
$B(z)$ can be interpreted as an instance of the {\it internal model principle 
of time-varying optimization}~\cite{GB-BVS:25-cdc,GB-BVS:24-arxiv}, as it 
captures the requirement that the optimization filter must embed an internal 
model of the temporal variability of the problem (encoded by \(m(z)\)). 
Conversely, the statement~\ref{Lemma1.2} provides a sufficient condition for an 
optimization filter to be of minimal order and achieve exact tracking, requiring 
that all roots of $\det(m(z) I - A C_N(z))$ to be in the open unit disk.
This condition will be used later in this work to construct algorithms that 
address Problem~\ref{prob:main}.

Because we search within the class of optimization filters of minimal order,
driven by the conclusion of~\ref{Lemma1.2}, we assume the optimization filter has the form~\eqref{eq:C-minimal}. Moreover, we also assume that the same filter is applied to each component of the objective function. 

\begin{assumption}[Optimization filter is of minimal order]\label{assumption:controller-minimal}
The optimization filter has the form $C(z) = c(z)I_d$, where $c(z) =\frac{d(z)}{m(z)}$ for some  $d(z) \in \real[z].$
\QEDB\end{assumption}


We next derive an equivalent reformulation of the convergence rate 
(cf. \Cref{def:asymptotic_tracking}) for this class of algorithms that will be instrumental to derive our 
main results.

\begin{lemma}[Characterization of the convergence rate]
\label{lem:characterization_rho}
Suppose \Cref{assumption:controller,assumption:eigenvalues,assumption:controller-minimal,assumption:model} hold.
The optimal worst-case convergence rate achievable by any 
minimal-order optimization algorithm of the form~\eqref{eq:tf_00} is given by the optimal value of the following min-max problem:
\begin{align}\label{eq:optimization-characterization_rho}
    \rho \deq \min_{d(z)\in\real[z]} ~~ \max_{\lambda\in[\mu,L]} ~~ &\ |z| \notag\\
    \text{subject to}~~ & m(z) - \lambda d(z) =0.
\end{align}
\end{lemma}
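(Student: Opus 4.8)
The plan is to reduce the vector‑valued, multi‑mode dynamics in~\eqref{eq:tf_00} to a family of decoupled scalar recursions, compute the tracking error in closed form for each, identify its poles through the internal‑model cancellation underlying \Cref{lem:necessary+sufficient_tracking}, and then take the worst case over the problem data and the minimum over the design polynomial $d(z)$.

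First I would diagonalize. By \Cref{assumption:eigenvalues}, write $A = Q\Lambda Q^\tp$ with $Q$ orthogonal and $\Lambda$ diagonal with entries $\lambda_i\in[\mu,L]$. Under \Cref{assumption:controller-minimal} we have $C(z)=c(z)I$ with $c(z)=d(z)/m(z)$, and by \Cref{assumption:controller} this is strictly proper, so $\deg d < n$. Passing to the coordinates $\tilde X = Q^\tp X$, the closed loop~\eqref{eq:tf_00} decouples into the scalar systems $\tilde X_i(z) = c(z)\bigl(\lambda_i\tilde X_i(z) + \tilde B_i(z)\bigr)$; since $Q$ is orthogonal and there are finitely many modes, the root‑convergence factor of $x_k - x_k^\star$ equals the maximum over $i$ of that of the $i$‑th error component $\tilde e_{i,k}$. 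Using $\tilde X^\star(z) = -\Lambda^{-1}\tilde B(z)$ and~\eqref{eq:Bz}, a direct computation gives
\[
  \tilde E_i(z)
  = \Bigl(\tfrac{c(z)}{1-\lambda_i c(z)} + \tfrac{1}{\lambda_i}\Bigr)\tilde B_i(z)
  = \frac{\tilde B_i(z)}{\lambda_i\bigl(1-\lambda_i c(z)\bigr)}
  = \frac{\bigl(Q^\tp B_N(z)\bigr)_i}{\lambda_i\bigl(m(z)-\lambda_i d(z)\bigr)},
\]
where the \emph{marginally stable} factor $m(z)$ cancels — precisely the internal‑model cancellation of \Cref{lem:necessary+sufficient_tracking}. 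Since $\deg d < n$ and $m$ is monic of degree $n$, the polynomial $m(z)-\lambda d(z)$ is monic of degree exactly $n$ for every $\lambda$, so its roots vary continuously with $\lambda$.

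Next I would invoke the standard fact that a causal scalar sequence whose $\Z$-transform is a proper rational function in lowest terms has root‑convergence factor $\limsup_k|\cdot|^{1/k}$ equal to the largest modulus among its poles. Hence, for any admissible objective the rate equals $\max_i$ of the dominant‑pole modulus of $\tilde E_i(z)$, which is at most $r(\lambda_i)\defeq\max\{\,|z| : m(z)-\lambda_i d(z)=0\,\}$, and therefore at most $\max_{\lambda\in[\mu,L]} r(\lambda)$. For tightness, given a design $d(z)$ I would pick $\lambda^\star\in\arg\max_{\lambda\in[\mu,L]} r(\lambda)$ (attained since $r$ is continuous on a compact interval), take any $A$ having $\lambda^\star$ as an eigenvalue with unit eigenvector $v$, and choose the constant forcing $B_N(z)\equiv v$, so that $B(z)=v/m(z)$ has all poles on $\disk_{=1}$ and \Cref{assumption:model} holds. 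Then $\tilde E_i(z)$ vanishes except in the $\lambda^\star$‑mode, where it equals $\tfrac{1}{\lambda^\star(m(z)-\lambda^\star d(z))}$, already in lowest terms; thus the rate for this objective equals $r(\lambda^\star)=\max_{\lambda\in[\mu,L]} r(\lambda)$. Combining the two directions, the worst‑case rate over objectives for a fixed $d(z)$ is exactly $\max_{\lambda\in[\mu,L]}|z|$ subject to $m(z)-\lambda d(z)=0$, i.e.\ the inner problem in~\eqref{eq:optimization-characterization_rho}.

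Finally, minimizing over $d(z)$ (equivalently over $\real[z]$ with $\deg d < n$; designs for which $r(\lambda)\ge 1$ somewhere on $[\mu,L]$ fail to track and may be discarded) yields the claimed min–max, the outer minimum being attained because $d\mapsto\max_\lambda r(\lambda)$ is continuous and grows unboundedly as the coefficients of $d$ do. I expect the only delicate point to be the tightness step: one must certify that for the worst‑case $\lambda^\star$ the objective can be chosen so that the dominant root of $m(z)-\lambda^\star d(z)$ is genuinely a pole of the error (the constant $B_N$ cancels nothing) and that possible repeated or complex‑conjugate dominant roots do not push the $\limsup$ strictly below $r(\lambda^\star)$ — which is exactly where the $\limsup$‑versus‑dominant‑pole fact is used. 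The remaining steps (diagonalization, the closed‑form error, and reducing the vector root‑convergence factor to the mode‑wise maximum) are routine.
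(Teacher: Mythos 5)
Your proof is correct and follows essentially the same route as the paper's: diagonalize $A$, decouple \eqref{eq:tf_00} into scalar loops indexed by the eigenvalues, and identify the worst-case rate with the maximal modulus of the roots of $m(z)-\lambda d(z)$ over $\lambda\in[\mu,L]$. Your version is in fact more careful than the paper's, which reads off the poles of $\tilde X_i(z)$ rather than of the tracking error $\tilde X_i(z)-\tilde X_i^\star(z)$ and therefore glosses over both the cancellation of the marginally stable factor $m(z)$ and the achievability construction (worst-case $\lambda^\star$ with a non-cancelling forcing term) that you supply explicitly.
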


\begin{proof}
Let $A = V \Lambda V^\tp$ be an eigendecomposition of~$A$, with 
$\Lambda = \diag(\lambda_1,\ldots, \lambda_d)$  and $V$ an orthonormal matrix of 
(real) eigenvectors. Let $\tilde x_k := V^\top x_k$ and $\tilde X(z)$ denote the 
corresponding $\Z$-transform. By projecting~\eqref{eq:tf_00} onto the range 
space of $V$, we obtain the update:
\begin{align*}
    \tilde X(z) = \frac{d(z)}{m(z)} (\Lambda \tilde X(z) + V^\top B(z)).
\end{align*}
With this decomposition, the iterates of~\eqref{eq:tf_00} separate into $d$ 
decoupled equations index by $i=1,\ldots,n$, given by 
\[
    \tilde X_i(s) = \frac{d(z)}{m(z)} (\lambda_i \tilde X_i(z) + v_i^\top B(z)).
\]
Rewriting this equation in input-output form gives
\begin{equation}\label{eq:reduced-system}
    \tilde X_i(z) = \frac{d(z)}{m(z) - \lambda_i d(z)} v_i^\top B(z).
\end{equation}
Therefore, the closed-loop poles of~\eqref{eq:tf_00} are the roots of 
$m(z) - \lambda_i d(z)$. Since the convergence rate of an LTI system is the maximum modulus of the poles of its transfer function, this gives the 
formulation~\eqref{eq:optimization-characterization_rho}.
\end{proof}

Lemma~\ref{lem:characterization_rho} provides a mathematical reformulation of 
the optimal worst-case convergence rate achievable by any minimal-order 
optimization algorithm. The outer minimization over the polynomial $d(z)$ captures a search over all 
optimization filters of minimal order that satisfy \Cref{assumption:controller}, 
while the inner maximization over $\lambda \in [\mu, L]$ accounts for 
the worst-case scenario over the entire class of objective functions consistent 
with \Cref{assumption:eigenvalues}.

    

\section{Bound on the worst-case convergence rate}

We now provide a bound on the worst-case convergence rate achievable by any minimal-order algorithm as in \Cref{fig:controller} that asymptotically tracks the minimizer of the quadratic objective~\eqref{eq:objective}. Our bound depends only on the condition ratio $\kappa$ of the objective function and the degree $n$ of the model of the time variation. Our main result is the following.

\begin{theorem}[Bound on worst-case convergence rate]
\label{thm:lower_bound}
Let \Cref{assumption:eigenvalues,assumption:model,assumption:controller,assumption:controller-minimal} hold. The worst-case convergence rate achievable by any minimal-order optimization algorithm is bounded below by $\rho_\text{TV}$ defined in~\eqref{eq:TV}, where $\kappa\defeq L/\mu$ is the condition ratio and $n$ is the degree of the model $m(z)$.
\end{theorem}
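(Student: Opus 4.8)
The plan is to reason directly from the min--max characterization in \Cref{lem:characterization_rho}: it suffices to show that \emph{every} feasible numerator $d(z)$ produces a worst-case rate at least $\rho_\text{TV}$, since the outer minimization in \eqref{eq:optimization-characterization_rho} then inherits this lower bound. Fix such a $d(z)$. By strict properness (\Cref{assumption:controller}), $d(z)$ has degree at most $n-1$, so for every $\lambda$ the closed-loop characteristic polynomial $q_\lambda(z) \defeq m(z) - \lambda\,d(z)$ is \emph{monic} of degree exactly $n$. The idea is to avoid analyzing the full root locus and instead evaluate $q_\lambda$ at the single point $z = 0$, but for the two \emph{extreme} values $\lambda = \mu$ and $\lambda = L$: for a monic polynomial of degree $n$, $|q_\lambda(0)|$ equals the product of the moduli of its $n$ roots, which converts the quantity ``largest root modulus'' into something I can control with elementary inequalities.

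Carrying this out: let $\rho^\star$ be the value of the inner maximization in \eqref{eq:optimization-characterization_rho} for the chosen $d(z)$. Since $\lambda = \mu$ and $\lambda = L$ are feasible, every root of $q_\mu$ and every root of $q_L$ has modulus at most $\rho^\star$, whence $|q_\mu(0)| \le (\rho^\star)^n$ and $|q_L(0)| \le (\rho^\star)^n$. On the other hand, \Cref{assumption:model} forces all $n$ roots of the monic polynomial $m(z)$ onto the unit circle, so $|m(0)| = 1$. Now write $q_\mu(0) = m(0) - \mu\,d(0)$ and $q_L(0) = m(0) - L\,d(0)$. Subtracting gives $(L-\mu)\,|d(0)| = |q_\mu(0) - q_L(0)| \le 2(\rho^\star)^n$, i.e.\ $|d(0)| \le 2(\rho^\star)^n/(L-\mu)$; and then, using the smaller multiplier $\mu$, $\;1 = |m(0)| \le |q_\mu(0)| + \mu\,|d(0)| \le (\rho^\star)^n\bigl(1 + \tfrac{2\mu}{L-\mu}\bigr) = (\rho^\star)^n\,\tfrac{L+\mu}{L-\mu}$. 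Rearranging yields $(\rho^\star)^n \ge \tfrac{L-\mu}{L+\mu} = \tfrac{\kappa-1}{\kappa+1}$, hence $\rho^\star \ge \rho_\text{TV}$; since $d(z)$ was arbitrary, \Cref{thm:lower_bound} follows. (The degenerate choice $d \equiv 0$ is consistent: then $q_\mu = q_L = m$, all roots lie on the unit circle, and $\rho^\star = 1 \ge \rho_\text{TV}$; one can also check the entire chain is tight for the optimally-tuned gradient step, recovering $\rho_\text{TI} = \tfrac{\kappa-1}{\kappa+1}$ when $n=1$.)

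The conceptual obstacle I expect is precisely the choice of what to evaluate and where: tracking how the $n$ closed-loop poles migrate as $\lambda$ sweeps $[\mu,L]$ is unwieldy, and the product-of-roots identity at $z=0$ sidesteps it --- but only if one uses \emph{both} endpoints $\lambda=\mu,L$. A single value of $\lambda$ is insufficient, because $d(0)$ could be tiny (even zero), making $|q_\lambda(0)|$ small for reasons unrelated to closed-loop stability; it is the two evaluations that let the triangle inequality eliminate $d(0)$ and surface the $\tfrac{\kappa-1}{\kappa+1}$ factor. Everything else --- the degree bound keeping $q_\lambda$ monic of degree $n$, and the identity $|m(0)|=1$ from the unit-circle pole constraint --- is routine bookkeeping once \Cref{assumption:controller} and \Cref{assumption:model} are invoked.
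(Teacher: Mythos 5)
Your proof is correct and follows essentially the same route as the paper's: the paper applies Vieta's formulas to every coefficient of $p_\lambda(z)=m(z)-\lambda d(z)$, bounds each elementary symmetric function of the roots by $\binom{n}{k}\rho^k$, and combines the two endpoint gains $\lambda=\mu,L$ via the triangle inequality, with the final bound coming from the $k=n$ (constant-coefficient) term where $|m_0|=1$. Your evaluation at $z=0$ is precisely that $k=n$ instance, so you have isolated exactly the coefficient that produces $\rho_\text{TV}$ and the argument goes through verbatim.
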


\begin{proof}
Suppose \Cref{assumption:eigenvalues,assumption:model,assumption:controller,assumption:controller-minimal} hold, and suppose the controller $c(z)$ is such that the algorithm in \Cref{fig:controller} asymptotically tracks a critical trajectory with rate $\rho\in(0,1)$ for all quadratic objectives~\eqref{eq:objective} satisfying these assumptions. Since the controller is minimal degree (by \Cref{assumption:controller-minimal}), it must have the form $c(z) = d(z)/m(z)$, where $d(z)$ and $m(z)$ are polynomials, the degree of the numerator $d(z)$ is strictly less than that of the denominator $m(z)$, and the model $m(z)$ is monic of degree $n$ with all roots on the unit circle. Define the characteristic polynomial of the subsystem in \eqref{eq:reduced-system} for a general eigenvalue $\lambda\in[\mu,L]$ as
\[
    p_\lambda(z) \defeq m(z) - \lambda\,d(z).
\]
Since the degree of $d(z)$ is strictly less than that of $m(z)$ and the model is monic, $p_\lambda(z)$ is also monic of degree $n$. Our goal is then to construct a lower bound on the convergence rate $\rho$ such that all roots of the polynomial $p_\lambda(z)$ are in the disk $\disk_{\leq\rho} \defeq \{z : |z|\leq\rho\}$ for all $\lambda\in[\mu,L]$.

Write the numerator of the optimization filter in terms of its coefficients as $d(z) = d_{n-1} z^{n-1} + \ldots + d_0$, where each of the $d_i$ coefficients may be zero. Also, denote the roots of the characteristic polynomial as $\zeta_1,\ldots,\zeta_n\in\complex$, which all have modulus at most $\rho$ by assumption. In terms of its roots and the coefficients of the controller polynomials, the characteristic polynomial is then
\[
    p_\lambda(z) \deq 
    z^n + \sum_{i=0}^{n-1} (m_i - \lambda\,d_i) z^i
    \deq \prod_{i=1}^n (z-\zeta_i).
\]
Define the corresponding elementary symmetric polynomials
\[
    e_k(p_\lambda) \deq \sum_{1\leq i_1<i_2<\cdots<i_k\leq n} \zeta_{i_1}\zeta_{i_2}\cdots\zeta_{i_k}
\]
for $k=1,\ldots,n$, which is the sum of all distinct products of $k$ roots. Vieta's formulas relate the characteristic polynomial coefficients to the elementary symmetric polynomials by
\[
    e_k(p_\lambda) \deq (-1)^k (m_{n-k}-\lambda d_{n-k}).
\]
Since $e_k(p_\lambda)$ is a sum of products of $k$ roots, each root has modulus at most~$\rho$, and there are $c_k \defeq \binom{n}{k}$ terms in the summation, the elementary symmetric polynomials satisfy the bound
\begin{equation}\label{eq:bound}
    |e_k(p_\lambda)| \leq c_k \rho^k, \qquad \forall \lambda\in[\mu,L] \text{ and } k=1,\ldots,n.
\end{equation}
We can then bound the size of the numerator polynomial coefficients by
\begin{align*}
    (L-\mu) |d_{n-k}| &= |(m_{n-k}-\mu\,d_{n-k})-(m_{n-k}-L\,d_{n-k})|, \\
    &\leq |m_{n-k}-\mu\,d_{n-k}| + |m_{n-k}-L\,d_{n-k}|, \\
    &\leq 2 c_k \rho^k,
\end{align*}
where the first inequality follows from the triangle inequality and the second inequality follows from the bound~\eqref{eq:bound} applied to both endpoints $\lambda=\mu$ and $\lambda=L$. Again using the bound~\eqref{eq:bound} at the endpoint $\lambda=\mu$ along with the (reverse) triangle inequality, we bound the size of the coefficients by
\[
    |m_{n-k}| - \mu\,|d_{n-k}| \leq |m_{n-k} - \mu\,d_{n-k}| \leq c_k \rho^k.
\]
Combining this with the bound on $|d_{n-k}|$ above yields
\begin{align*}
    |m_{n-k}| &\leq c_k\rho^k + \mu\,|d_{n-k}|, \\
    &\leq c_k\rho^k + \mu\frac{2c_k}{L-\mu}\rho^k, \\
    &= c_k \rho^k \frac{\kappa+1}{\kappa-1},
\end{align*}
where $\kappa\defeq L/\mu$ is the condition ratio. Isolating the convergence rate and using that this holds for all $k=1,\ldots,n$ yields the lower bound
\begin{equation}\label{eq:bound-general}
    \rho \dgeq \max_{k=1,\ldots,n} \ \biggl(\frac{|m_{n-k}|}{c_k} \cdot \frac{\kappa-1}{\kappa+1}\biggr)^{1/k}.
\end{equation}
Again using Vieta's formulas, each model coefficient $m_{n-k}$ is (up to a sign) the sum of all distinct products of $k$ roots, each of which has unit modulus by \Cref{assumption:model}. Therefore, $|m_{n-k}| = e_k(m) \leq c_k$ for all $k=1,\ldots,n$. Moreover, this holds with equality when $k=n$, which produces the lower bound in~\eqref{eq:TV}.
\end{proof}

Theorem~\ref{thm:lower_bound} provides a fundamental lower bound on 
the rate of convergence attainable by any minimal-order optimization 
filter. The bound is expressed solely in terms of the properties of 
the optimization (the condition ratio $\kappa$ of the objective) and 
the degree $n$ of the temporal variability model $m(z)$. 
The dependence on $\kappa$ is classical and in line with  
time-invariant counterparts: as the problem becomes more 
ill-conditioned (i.e., $\kappa$ grows), the lower bound approaches 
one, indicating arbitrarily slow convergence in the worst case. 
The role of $n$ quantifies a new, fundamental bound intrinsic to 
time-varying problems: as the temporal variability to be tracked 
becomes more complex (i.e., as \(n\) increases), the algorithm’s 
convergence rate necessarily degrades, highlighting that {\it 
high-order internal models inherently preclude fast convergence.}
Thus,~\eqref{eq:TV} characterizes an intrinsic tradeoff between 
problem conditioning and model complexity, highlighting that even in 
the best-case design, the convergence rate cannot be improved beyond 
this limit.
To compare, for the special case of polynomial models of the form $m(z) = (z-1)^n$, the lower bound for \textit{non-minimal} controllers is~\cite{AXW-IRP-IS:25}
\[
    \rho \geq \biggl(\frac{\sqrt{\kappa}-1}{\sqrt{\kappa}+1}\biggr)^{1/n}.
\]

\section{Controller Design via Root Locus}\label{sec:design}

We now shift to designing optimization filters of minimal order (see \Cref{def:minimal-order}) that satisfy \Cref{assumption:controller} and 
attain the fundamental rate bound~\eqref{eq:TV}. Our approach is based on 
root locus techniques, which enables us to construct closed-form solutions 
to the optimization~\eqref{eq:optimization-characterization_rho}. 
Due to the complexity of this task, we focus on three specific cases: $n=1, 2, 3.$

\subsection{Case: \texorpdfstring{$n=1$}{n=1}}

\begin{figure}[htb]
  \centering\includegraphics[width=\linewidth]{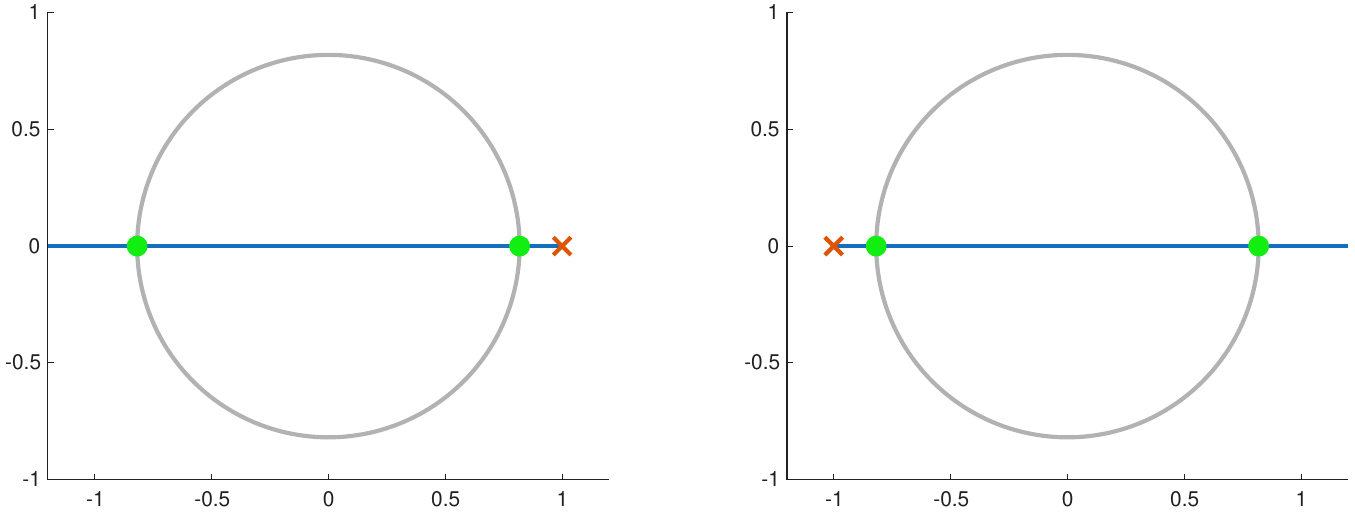}
  \caption{Root locus with controller $-\alpha/(z-1)$ (left) and $\alpha/(z+1)$ (right). The locus (\textcolor{matlabblue}{blue}) starts at the open-loop poles (\textcolor{matlaborange}{$\times$}). The pole locations at gains $\lambda=\mu$ and $\lambda=L$ are shown (\textcolor{mygreen}{$\bullet$}). For all $\lambda\in[\mu,L]$, the root locus is entirely contained in the $\rho$ circle (\textcolor{gray}{gray}).}
  \label{fig:rl-n1}
\end{figure}

When the model has only a single pole, it must be either $\pm 1$ since the poles are on the unit circle and the model coefficients are real (so complex roots must appear in conjugate pairs). We now consider these two cases.

First, suppose the model is $m(z) = z-1$. Then we can use the standard gradient descent controller
\[
  c(z) = \frac{-\alpha}{z-1} \ddand
  \alpha = \frac{2}{L+\mu},
\]
which achieves the optimal worst-case rate $\rho = \frac{\kappa-1}{\kappa+1}$. In this case, the root locus of $1-\lambda\,c(z)$ starts at the open loop pole of $z=1$ when $\lambda=0$ and moves to the left on the real axis as $\lambda$ increases, crossing $z=\rho$ when $\lambda=\mu$ and $z=-\rho$ when $\lambda=L$. 
If the model is $m(z) = z+1$, then the controller $c(z) = \tfrac{\alpha}{z+1}$ with the same stepsize $\alpha$ achieves the same rate. The root locus for both cases is shown in \Cref{fig:rl-n1}.

\subsection{Case: \texorpdfstring{$n=2$}{n=2}}

Now suppose the model has a single pair of complex conjugate poles on the unit circle with angle $\theta$ so that $n=2$. We can then parameterize the controller as
\[
  c(z) = \frac{c_1 z-c_2}{z^2 - 2\cos(\theta) z + 1}
\]
with $c_1,c_2\in\real$. The parameters that yield the optimal worst-case rate in~\eqref{eq:TV} are
\[
  c_1 = \frac{-2\cos\theta}{L} \ddand
  c_2 = \frac{-2}{L+\mu},
\]
which are the unique solutions to the condition that the root locus pass through both $z=-\rho$ and $z=\rho$ when $\lambda=L$. The root locus of this controller is shown in \Cref{fig:rl-n2}.

The previous case assumes the roots are complex conjugates. If instead the model has real roots at $z = +1$ and $z=-1$ so that $m(z) = z^2-1$, the optimal worst-case rate is achieved by the controller parameters $c_1 = 0$ and $c_2 = \tfrac{2}{L+\mu}$, which can be derived from the condition that the root locus pass through both $z=-\rho$ and $z=\rho$ when $\lambda=\mu$.

\begin{figure}[htb]
  \centering\includegraphics[height=4.5cm]{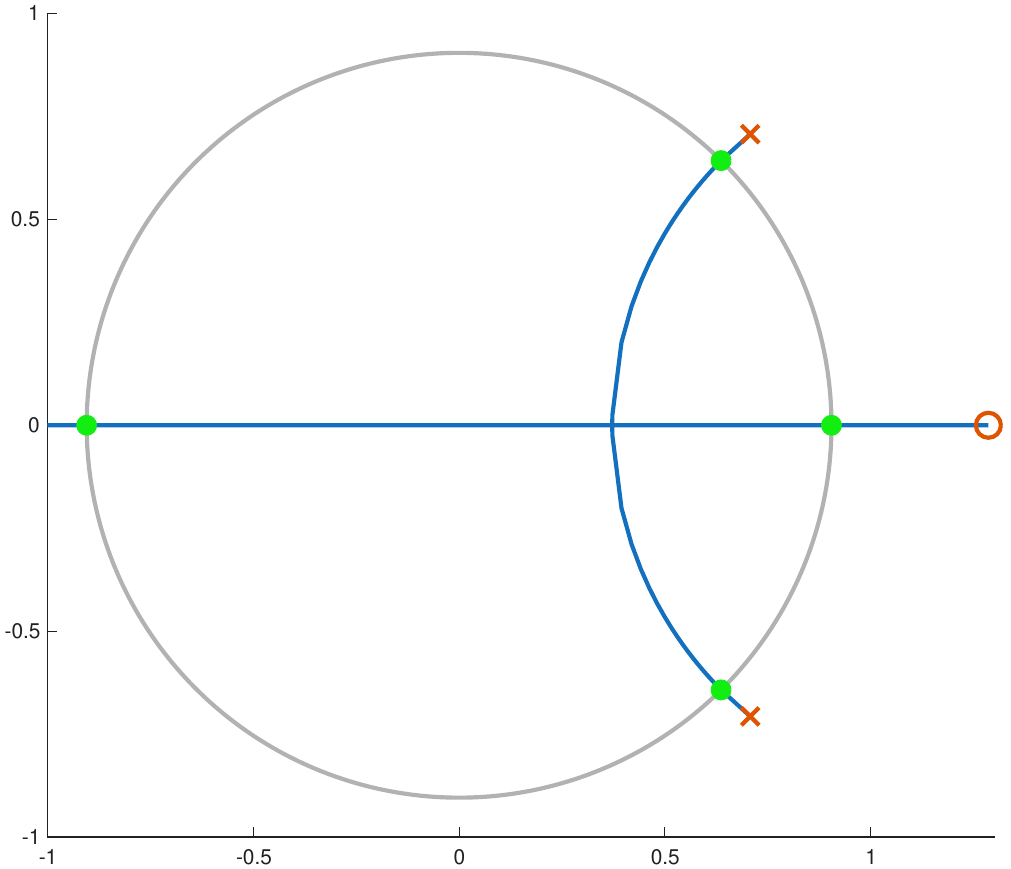}
  \caption{Root locus with controller for a single frequency $\theta = \pi/4$. The locus (\textcolor{matlabblue}{blue}) starts at the open-loop poles (\textcolor{matlaborange}{$\times$}) and ends at the open-loop zeros (\textcolor{matlaborange}{$\circ$}). The pole locations at gains $\lambda=\mu$ and $\lambda=L$ are shown (\textcolor{mygreen}{$\bullet$}). For all $\lambda\in[\mu,L]$, the root locus is entirely contained in the $\rho$ circle (\textcolor{gray}{gray}).}
  \label{fig:rl-n2}
\end{figure}

\subsection{Case: \texorpdfstring{$n=3$}{n=3}}\label{sec:design-n3}

Now suppose the model has both a pair of complex conjugate poles and a pole at one,
\begin{equation}\label{eq:model-n3}
  m(z) = (z-1) (z^2 - 2\cos(\theta) z + 1).
\end{equation}
We can then parameterize the controller as
\[
  c(z) = \frac{c_2 z^2 + c_1 z + c_0}{m(z)}.
\]
The optimal worst-case rate is achieved by the parameters
\[
  \begin{aligned}
    c_0 &= \tfrac{-1+\rho^3}{\mu}, \\
    c_1 &= \tfrac{(-L+\mu+(L+\mu)\rho)(1+\rho^2) + 2\rho(L+\mu-\rho(L-\mu))\cos(\theta)}{2\mu L\rho}, \\
    c_2 &= -\tfrac{(-L+\mu+(L+\mu)\rho^2)(1+\rho) + 2\rho(-L+\mu+(L+\mu)\rho)\cos(\theta)}{2\mu L\rho^2},
  \end{aligned}
\]
which yields the root locus in \Cref{fig:rl-n3}.

\begin{figure}[htb]
  \centering\includegraphics[height=4.5cm]{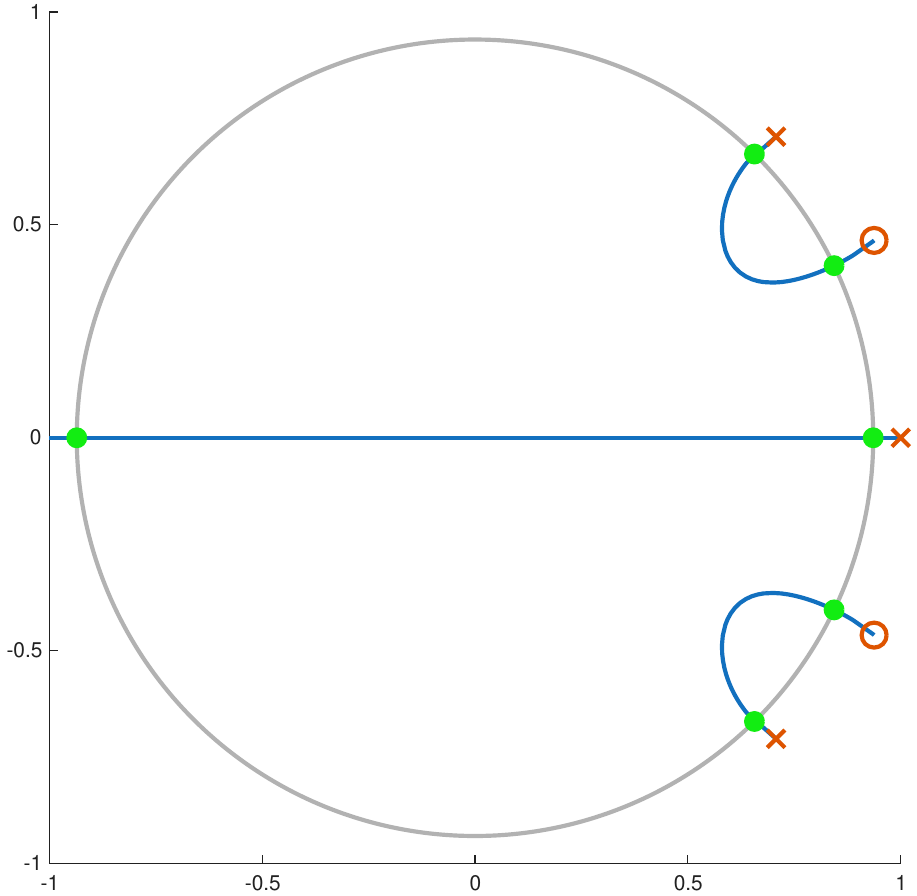}
  \caption{Root locus with controller for both a constant and a single frequency $\theta = \pi/4$. The locus (\textcolor{matlabblue}{blue}) starts at the open-loop poles (\textcolor{matlaborange}{$\times$}) and ends at the open-loop zeros (\textcolor{matlaborange}{$\circ$}). The pole locations at gains $\lambda=\mu$ and $\lambda=L$ are shown (\textcolor{mygreen}{$\bullet$}). For all $\lambda\in[\mu,L]$, the root locus is entirely contained in the $\rho$ circle (\textcolor{gray}{gray}).}
  \label{fig:rl-n3}
\end{figure}

\section{Numerical Simulation}\label{sec:simulation}

\begin{figure}[htb]
    \centering
    \includegraphics[width=\linewidth]{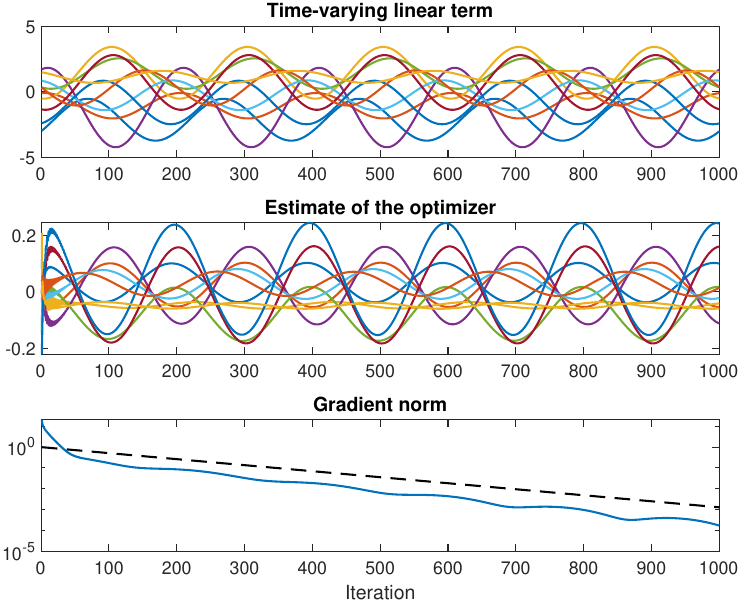}
    \caption{Numerical simulation of the optimal worst-case controller from \Cref{sec:design-n3} to minimize the quadratic objective in~\eqref{eq:objective}. (Top) Trajectories of each component in the time-varying linear term $b_k$. (Middle) Trajectories of each component in the time-varying optimizer $x_k^\star$. (Bottom) Gradient norm $\|A x_k + b_k\|$ and the bound on the worst-case rate $\rho^k$ (dashed). See \Cref{sec:simulation} for details.}
    \label{fig:simulation}
\end{figure}

To illustrate our results, we simulate the trajectory of~\eqref{eq:tf_00} using the optimization filter $C(z)$ designed in \Cref{sec:design} to the quadratic objective in~\eqref{eq:objective}. We chose parameters $\mu=1$, $L=10$, and $d=10$. The matrix $A$ was randomly generated with eigenvalues uniformly distributed in $[\mu,L]$, and the time-varying linear term $b_k$ was generated by an LTI system corresponding to the model $m(z)$ in~\eqref{eq:model-n3} with $\theta = 0.01\pi$. The system trajectories are shown in \Cref{fig:simulation}. Note that the controller from~\cite{NB-RC-SZ:24} does not exist in this case, as the linear matrix inequalities to design the controller were infeasible.


\section{Conclusions}

We have established fundamental limits on the attainable convergence 
rates of gradient-based algorithms for time-varying quadratic optimization. 
By leveraging tools from control theory, in particular the internal model 
principle and root locus techniques, we have shown that the optimal 
worst-case convergence rate necessarily degrades with the complexity of the 
temporal variability, quantified by the degree $n$ of the underlying model. 
Overall, the results of this paper contribute to a deeper understanding of 
the intrinsic performance limits of gradient-based methods in time-varying 
optimization, showing that temporal variability fundamentally constrains 
the achievable rate of convergence.
Avenues for future research are the incorporation of additional 
dynamics into the controller (beyond those strictly required by the internal 
model) to achieve \emph{acceleration}, and considering more general function classes 
beyond quadratics.




{\setlength\parskip{2pt}
\bibliographystyle{IEEEtran}
\bibliography{BIB/references,BIB/alias,BIB/full_GB,BIB/GB}}

\end{document}